\documentclass[reqno,12pt]{amsart}
\usepackage{bbm}
\usepackage[all,pdf]{xy}
\usepackage{epsfig}
\usepackage{amsmath}
\usepackage{amssymb}
\usepackage{amscd}
\usepackage{graphicx}
\topmargin=0pt

\oddsidemargin=0pt

\evensidemargin=0pt

\textwidth=15cm

\textheight=22cm

\raggedbottom

\overfullrule5pt

\newtheorem{thm}{Theorem}[section]
\newtheorem{lemma}[thm]{Lemma}

\newtheorem{ques}[thm]{Question}
\newtheorem{cor}[thm]{Corollary}
\newtheorem{defn}[thm]{Definition}
\newtheorem{rem}[thm]{Remark}

\def \la {\longrightarrow}

\def \ol {\overline}
\def \N {\mathbb N}

\def \Z {\mathbb Z}

\parskip 1.0ex
\numberwithin{equation}{section}

\begin{document}

	\baselineskip 14pt
	
	\title[Weak mean equicontinuity]{Weak mean equicontinuity for  a countable discrete amenable group action}
	
	\date{Dec. 24, 2020}
	
	\author{Leiye Xu and Liqi Zheng}
	
	\address{Department of Mathematics, University of Science and Technology of
		China, Hefei, Anhui, 230026, P.R. China}
	
	\email{leoasa@mail.ustc.edu.cn,lqzheng@ustc.edu.cn}

\subjclass[2010]{Primary: 54H20; Secondary: 37A20, 37B05, 37B45.}
\keywords{Wasserstein distance, Unique ergodicity, Amenable group, Weak mean equicontinuity.}

 \begin{abstract}
	    The weak mean equicontinuous properties for a countable discrete amenable group $G$ acting continuously on a compact metrizable space $X$ are studied. 
	    It is shown that the weak mean equicontinuity of $(X \times X,G)$ is equivalent to   the mean equicontinuity of $(X,G)$. Moreover, when $(X,G)$ has full  measure center or $G$ is abelian, it is shown that $(X,G)$ is weak mean equicontinuous if and only if all points in $X$ are uniquely  ergodic points and the map $x \to \mu_x^G$ is continuous, where $\mu_x^G$ is the unique ergodic measure on $\{\ol{Orb(x)}, G\}$.
	\end{abstract}
		\maketitle
		
	\section{Introduction}
	 Throughout this paper,  $G$ is a countable infinite discrete amenable group. By a \emph{$G$-system} we mean a pair $(X,G)$, where $X$ is a compact metric space with metric $d$ and $G$ acts as a group of continuous maps  from $X$ to itself.
	In particular, when $G=\mathbb{Z}$ the $\mathbb{Z}$-system $(X,\mathbb{Z})$ can be considered inducing by a homeomorphism  $T$ from $X$ to itself. In this case we say that $(X,T)$ is a \emph{topological dynamical system} (t.d.s. for short).

	In the theory of topological dynamical systems, several notions of continuity have been studied. People firstly focused on {\it equicontinuous systems}, because they have simple dynamical behaviors \cite{EJK,PJ}. In recent years, {\it mean equicontinuity }has received keen interest. We refer to \cite{FG,GM,WJLX,JL,JSX} for further study on mean equicontinuity and related subjects. But in the measure theoretic point of view, we are only interested in the cumulative effect of points in orbits, in which the order makes no sense. To ignore the order, Zheng and Zheng \cite{ZZ} introduced the notion {\it weak mean equicontinuity}. They show that a t.d.s. is weak mean equicontinuous if and only if the time average of any continuous function  converges to a continuous function.
	
	The aim of this paper is to study the weak mean equicontinuity for $G$-actions. In order to make clear statement of our results, we introduce the following notations. 	
	
	A countable infinite discrete group $G$ is \emph{amenable} \cite{EF} if there exists a sequence of finite nonempty subsets $\{F_n \}_{n=1}^{\infty} $ of $G$, which is called a \emph{left F{\o}lner sequence}, such that $\lim_{n \to +\infty} \frac {|gF_n \Delta F_n|}{|F_n|} = 0$ for every $g \in G$, where $| \cdot |$ denotes the cardinality of a set. 
	Every abelian group is amenable. For example, if $G = \mathbb{Z}^d$, then $\{F_n\}_{n=1}^{\infty}$ is a left F{\o}lner sequence of $G$, where $F_n = [-n, n]^d \cap \Z^d$ for all $n \in \N^+$.

	Denote by $M(X,G)$ the set of all $G$-invariant  Borel probability measures. It is well known that for a countable infinite amenable group $G$, $M(X,G)$ is not empty. In particular, we say $(X,G)$ is uniquely ergodic if $M(X,G)$ is singleton.
	
Given a $G$-system $(X,G)$, a finite nonempty subset $F$ of $G$ and $x \in X$, define 
\[
\mu_{x,F} = \frac 1{|F|} \sum_{g \in F} \delta_{gx}
\]
where $\delta_{z}$ is the Dirac measure which has full measure on point $z$. For a left F{\o}lner sequence $\mathcal{F} =\{F_n\}_{n=1}^{\infty}$ of $G$ and $x,y \in X$, define
\begin{equation*}
W_\mathcal{F}(x,y)= \limsup_{ n \to +\infty} W (\mu_{x,F_n}, \mu_{y,F_n}) ,
\end{equation*}
where $W(\cdot, \cdot)$ is  the{ \it Wasserstein distance} (see Section \ref{2} for definition) on Borel probability measure space. Since $W(\cdot, \cdot)$ satisfies triangle inequality, we have that $W_\mathcal{F}(x,y)$ also satisfies triangle inequality. Hence it is a pseudometric on $X$.

\begin{defn}
	Let $(X,G)$ be a $G$-system and $\mathcal{F}$ be a left F{\o}lner sequence of $G$.  We say $(X,G)$ is weak mean equicontinuous with respect to $\mathcal{F}$ if for any $\varepsilon > 0$, there is $\delta >0$ such that  $W_\mathcal{F}(x,y) < \varepsilon$  whenever $x,y \in X$ with $d(x,y) < \delta$.  Specially, we call $(X,G)$ is weak mean equicontinuous if it is weak mean equicontinuous with respect to all  left F{\o}lner sequences of $G$.
\end{defn}
\begin{rem}
As in \cite{ZZ}, we can  define $W_{\mathcal{F}}(\cdot,\cdot)$ by ignoring the order. In Appendix, we will show that the two definitions are the same.
\end{rem}
  Let $\mathcal{F} = \{ F_n \}_{n=1}^{+\infty}$ be a left F{\o}lner sequence of $G$.  Then $(X,G)$ is weak mean equicontinuous with respect to $\mathcal{F}$ if and only if $W_\mathcal{F}(x,y)$ is continuous on $X \times X$. Recall that $(X,G)$ is mean equicontinuous with respect to $\mathcal{F}$  if for any $\varepsilon > 0$, there is $\delta >0$ such that 
	 $$\limsup_{n \to +\infty} \frac 1{|F_n|} \sum_{g\in F_n} d(gx, gy) < \varepsilon$$
	 whenever $x,y \in X$ and $d(x,y) < \delta$ \cite{GM}. Hence, the relation of mean equicontinuity and weak mean equicontinuity can be stated as follows.
	 		
    \begin{thm}\label{thm1.2}
    	Let $(X,G)$ be a $G$-system and $\mathcal{F}$ be a left F{\o}lner sequence of $G$. Then the following two statements are equivalent:
    	\begin{itemize}
    		\item [(\emph{1})]
    		$(X,G)$ is mean equicontinuous  with respect to $\mathcal{F}$;
    		
    		\item [(\emph{2})]
    		$(X\times X, G)$ is weak mean equicontinuous  with respect to $\mathcal{F}$.
    	   	\end{itemize}
       
    \end{thm}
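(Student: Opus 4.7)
The approach I would take is to bridge the two concepts through a single exact identity between the Wasserstein distance on $X\times X$ and the averaged pointwise distance on $X$. Equipping $X\times X$ with the sum metric $d'((a_1,a_2),(b_1,b_2))=d(a_1,b_1)+d(a_2,b_2)$, my plan is to first establish that for every $x,y\in X$ and every finite nonempty $F\subset G$,
\[
W\!\bigl(\mu_{(x,x),F},\,\mu_{(x,y),F}\bigr)=\frac{1}{|F|}\sum_{g\in F}d(gx,gy).
\]
The ``$\le$'' direction should come from the diagonal coupling that pairs $(gx,gx)$ with $(gx,gy)$ for each $g\in F$; its cost is exactly the right-hand side. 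For ``$\ge$'', I would observe that any coupling of the two empirical measures is supported on quadruples $((a_1,a_2),(b_1,b_2))$ with $a_1=a_2$, whence the triangle inequality gives $d(a_1,b_1)+d(a_2,b_2)=d(a_1,b_1)+d(a_1,b_2)\ge d(b_1,b_2)$; integrating against the second marginal $\mu_{(x,y),F}$ then recovers the right-hand side.

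With this identity in hand, the implication $(2)\Rightarrow(1)$ becomes essentially tautological: noting that $d'((x,y),(x,x))=d(x,y)$, weak mean equicontinuity of $(X\times X,G)$ applied to the pair $((x,y),(x,x))$ transports directly, via the identity, to the $\limsup$ condition defining mean equicontinuity of $(X,G)$. For the direction $(1)\Rightarrow(2)$, my plan is to invoke a different diagonal coupling, this time between $\mu_{(x_1,y_1),F_n}$ and $\mu_{(x_2,y_2),F_n}$, which yields
\[
W\!\bigl(\mu_{(x_1,y_1),F_n},\,\mu_{(x_2,y_2),F_n}\bigr)\le \frac{1}{|F_n|}\sum_{g\in F_n}\bigl[d(gx_1,gx_2)+d(gy_1,gy_2)\bigr].
\]
Applying mean equicontinuity with tolerance $\varepsilon/2$ separately to each summand (valid because $d'((x_1,y_1),(x_2,y_2))<\delta$ forces both component distances below $\delta$) and then taking $\limsup_n$ should deliver $W_\mathcal{F}((x_1,y_1),(x_2,y_2))<\varepsilon$.

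The main obstacle I anticipate is the lower bound in the key identity, i.e.\ ruling out that some clever non-diagonal coupling beats the diagonal one. It hinges on two features: the first marginal being concentrated on the diagonal of $X\times X$, and the additive structure of the sum metric, which makes the triangle inequality argument work without extra constants. A minor bookkeeping point is to confirm that the paper's convention for the metric on $X\times X$ is equivalent, up to constants, to the sum metric; any such equivalence preserves the notion of weak mean equicontinuity on the product, so the theorem is unaffected by this choice.
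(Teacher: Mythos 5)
Your proposal is correct and follows essentially the same route as the paper: the direction $(1)\Rightarrow(2)$ uses the same diagonal coupling of the two empirical measures on $X\times X$, and your exact identity $W(\mu_{(x,x),F},\mu_{(x,y),F})=\frac{1}{|F|}\sum_{g\in F}d(gx,gy)$ is just a sharpened packaging of the paper's lower bound $W(\mu_{(x,y),F_n},\mu_{(y,y),F_n})\ge\frac{1}{|F_n|}\sum_{g\in F_n}d(gx,gy)$, which the paper asserts without the coupling-on-the-diagonal justification you supply. Note also that the paper's metric on $X\times X$ is literally the sum metric, so your final bookkeeping caveat is not needed.
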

     	Here  $(X \times X, G)$ is the product system of $(X,G)$ such that 
     $g(x,y) = (gx, gy)$
     holds for any $g \in G$ and $(x,y) \in X \times X$. The metric $\tilde d$  on $X\times X$ is defined by
     $\tilde d((x,y),(x',y'))=d(x,x')+d(y,y')$ for $x,x',y,y'\in X$.

     For a $G$-system $(X,G)$, the {\it measure center } is $\ol{\cup_{\mu \in M(X,G ) } \text{supp} \mu} $, which is the minimal compact subset of $X$ with full measure for any invariant measure $\mu \in M(X,G)$. We say $x \in X$ is  a { \it uniquely ergodic point } if the $G$-system $( \overline{ Orb(x) }, G) $ is uniquely ergodic, where $Orb(x)=\{gx:g\in G\}$ is the orbit of $x$. If $x \in X$ is uniquely ergodic point, we note the unique ergodic measure of $( \overline{  Orb(x)}, G) $ by $\mu_x^G$. Recently, Frantzikinakis and Host \cite{FB} prove that in a zero entropy system, all uniquely ergodic points satisfy the logarithmic Sarnak conjecture.
     Thus it is interesting to consider the
     case when all the points in a $G$-system are uniquely ergodic points. In this paper, we prove that in a weak mean equicontinuous system, all the points in the measure center are uniquely ergodic points.
     
    We also focus on uniform convergence, a classic topic of dynamical systems for it  meets with more favor  in mathematical physics. In 1952, Oxtoby \cite{JCO} shows that the Birkhoff average of any continuous function converges uniformly in uniquely ergodic systems. In 1982, Johnson and Moser \cite{JM} prove that for a t.d.s. $(X,T)$, if a continuous  function $f \in C(X)$ is orthogonal to all invariant measures (i.e. $\int f \mathrm{d} \mu = 0$ holds for any $\mu \in M(X,T)$), then the  Birkhoff average converges to $0$ uniformly. Recently, Zhang and Zhou \cite{ZZH} also study related topics. In this paper, we show that for weak mean equicontinuous systems,  Birkhoff averages of  continuous functions are uniformly convergent on measure center.

   \begin{thm}\label{thm1.4}
   	Let $(X,G)$ be a $G$-system.  If the measure center of $(X,G)$ is $X$, then the following statements are equivalent.
   	\begin{itemize}
   		
   		\item [(\emph{1})]
   		$(X,G)$ is weak mean equicontinuous;
   		\item [(\emph{2})]
   		$(X,G)$ is weak mean equicontinuous with respect to a F{\o}lner sequence of $G$;		
   		\item [(\emph{3})]
   		All points in $X$ are uniquely  ergodic points and the map $x\to \mu_{x}^G$ is continuous;
   		\item [(\emph{4})]For any $f\in C(X)$, there exists $f^*\in C(X)$ such that  for any left  F{\o}lner sequence $\mathcal{F}=\{F_n\}$ of $G$,
   		$$\frac{1}{|F_n|}\sum_{g\in F_n}f(gx) \text{ converges to } f^*(x)\text{ uniformly}.$$
   	\end{itemize}
   \end{thm}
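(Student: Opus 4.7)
The plan is to establish the cycle $(1) \Rightarrow (2) \Rightarrow (3) \Rightarrow (1)$ together with the equivalence $(3) \Leftrightarrow (4)$. The implication $(1) \Rightarrow (2)$ is immediate from the definitions, since (1) asks for weak mean equicontinuity with respect to every left F{\o}lner sequence while (2) asks for it with respect to at least one. For $(3) \Rightarrow (1)$ I would argue that unique ergodicity of each $(\ol{Orb(x)}, G)$ together with the Oxtoby-type uniform ergodic theorem for amenable group actions forces $\mu_{x,F_n} \to \mu_x^G$ weakly along every left F{\o}lner sequence $\mathcal{F}$, so that $W_\mathcal{F}(x,y) = W(\mu_x^G, \mu_y^G)$; by the assumed continuity of $x \mapsto \mu_x^G$ in the Wasserstein topology and compactness of $X$, this is a jointly uniformly continuous function of $(x,y)$, which is precisely weak mean equicontinuity.

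For $(3) \Rightarrow (4)$, I would set $f^*(x) := \int_X f \, d\mu_x^G$, which is continuous in $x$ by (3). Pointwise convergence of the Birkhoff averages to $f^*(x)$ follows from unique ergodicity on each orbit closure, and uniform convergence is obtained by a standard compactness/subsequence argument: a putative failure would yield $x_k \to x_0$ and $n_k \to \infty$ with $|\int f\, d\mu_{x_k, F_{n_k}} - f^*(x_k)|$ bounded below, and extracting a weak$^*$ limit $\mu_{x_k, F_{n_k}} \to \nu$, then invoking the Wasserstein estimates available from (1) (which follows from (3) as above), one identifies $\int f \, d\nu$ with $f^*(x_0)$, a contradiction. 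Conversely, $(4) \Rightarrow (3)$ is the cleanest step: (4) says $\lim_n \int f\, d\mu_{x, F_n} = f^*(x)$ exists as a continuous function of $x$ along every F{\o}lner sequence, so by Riesz representation $\mu_{x, F_n}$ converges to a weak$^*$-continuous map $x \mapsto \mu_x$; since $\mu_x$ is $G$-invariant and supported on $\ol{Orb(x)}$, it must be the unique invariant measure there.

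The core difficulty is $(2) \Rightarrow (3)$, where weak mean equicontinuity is known only along one F{\o}lner sequence $\mathcal{F} = \{F_n\}$. I would proceed in four steps. \emph{Step 1:} extract a tempered F{\o}lner subsequence $\mathcal{F}' = \{F_{n_k}\}$ of $\mathcal{F}$ (Lindenstrauss). \emph{Step 2:} for each ergodic $\mu \in M(X,G)$, Lindenstrauss' pointwise ergodic theorem gives that $\mu$-a.e. point is $\mathcal{F}'$-generic for $\mu$; combined with the hypothesis that the measure center equals $X$, this produces a dense set of $\mathcal{F}'$-generic points in $X$. \emph{Step 3:} for $x \in X$ and $y$ close to $x$ generic for an ergodic $\mu$, weak mean equicontinuity yields $\limsup_k W(\mu_{x, F_{n_k}}, \mu) \le W_\mathcal{F}(x,y) < \varepsilon$, so every weak$^*$ limit of $\{\mu_{x, F_{n_k}}\}$ lies within $\varepsilon$ of $\mu$. \emph{Step 4:} letting $\varepsilon \to 0$ along a diagonal argument, conclude that all subsequential limits of $\{\mu_{x, F_n}\}$ along the full F{\o}lner sequence $\mathcal{F}$ must coincide, giving a single limit $\mu_x = \lim_n \mu_{x, F_n}$; the same estimates then force $x \mapsto \mu_x$ to be continuous and $\mu_x$ to be the unique $G$-invariant measure on $\ol{Orb(x)}$.

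The hardest point of the argument will be Step 4: the pointwise ergodic information supplied by Lindenstrauss lives only on the tempered subsequence $\mathcal{F}'$, whereas weak mean equicontinuity controls only the $\limsup$ of $W(\mu_{x, F_n}, \mu_{y, F_n})$ along the entire sequence $\mathcal{F}$. Bridging these two — promoting convergence along $\mathcal{F}'$ to convergence along $\mathcal{F}$ itself — is where I expect the most technical care, likely via a careful choice of $y$'s approaching $x$ through generic points of different ergodic measures and exploiting the continuity (rather than just the smallness) of $W_\mathcal{F}$ near the diagonal.
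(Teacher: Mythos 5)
Your overall architecture matches the paper's (the cycle through $(2)\Rightarrow(3)$ via tempered subsequences and Lindenstrauss, then a compactness argument for uniform convergence), but two of your steps have genuine gaps. The more serious one is in $(2)\Rightarrow(3)$: Steps 1--4 only control the empirical measures $\mu_{x,F_n}$ of the point $x$ itself, and even if you succeed in showing $\mu_{x,F_n}\to\mu_x$ along all of $\mathcal{F}$, that does \emph{not} make $x$ a uniquely ergodic point --- statement $(3)$ asserts that $M(\ol{Orb(x)},G)$ is a singleton, which is a property of the whole orbit closure, not of the convergence of $x$'s own averages. Moreover, the difficulty you flag as the hardest (promoting convergence from $\mathcal{F}'$ to $\mathcal{F}$) is actually a red herring: once $(\ol{Orb(x)},G)$ is known to be uniquely ergodic, every weak$^*$ limit point of $\{\mu_{x,F_n}\}$ along \emph{any} left F{\o}lner sequence is an invariant measure of $\ol{Orb(x)}$ and hence equals $\mu_x^G$ (the paper's Lemma \ref{lem6}), so no promotion argument is needed. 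The missing idea is the paper's Lemma \ref{132} together with Lemma \ref{13}: for ergodic $\mu$, if $(\mathrm{supp}\,\mu,G)$ were not uniquely ergodic there would be a second ergodic measure supported inside $\mathrm{supp}\,\mu$, and generic points of the two measures can be taken arbitrarily close together, contradicting weak mean equicontinuity; this gives $W_{\mathcal{F}}(x,gx)=0$ on $\cup_{\mu\in M^e(X,G)}\mathrm{supp}\,\mu$, which extends by continuity of $W_{\mathcal{F}}$ to the measure center $=X$, and then the criterion ``$W_{\mathcal{F}}(x,gx)=0$ for all $g$ implies $W_{\mathcal{F}}\equiv 0$ on $\ol{Orb(x)}\times\ol{Orb(x)}$, hence unique ergodicity'' finishes. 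You need some version of this orbit-closure argument; without it $(3)$ is simply not reached.

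The second gap is in $(3)\Rightarrow(4)$: you cannot identify the diagonal limit $\nu=\lim_k\mu_{x_k,F_{n_k}}$ with $\mu_{x_0}^G$ by ``invoking the Wasserstein estimates available from $(1)$.'' Those estimates bound $\limsup_n W(\mu_{x,F_n},\mu_{y,F_n})$ for a \emph{fixed} pair of points, with a threshold in $n$ depending on the pair; they give no control of $W(\mu_{x_k,F_{n_k}},\cdot)$ at the single index $n_k$ for the moving point $x_k$ (and bounding $W(\mu_{x_k,F_{n_k}},\mu_{x_k}^G)$ is exactly the uniformity you are trying to prove, so the step is circular as stated). The paper's argument here is genuinely different: it takes the ergodic decomposition of $\nu$, extracts an ergodic component $\tilde\mu$ with $\rho(\tilde\mu,\mu_{x_0}^G)\ge\varepsilon$, notes that a point $y\in\mathrm{supp}\,\tilde\mu\subset\mathrm{supp}\,\nu$ is a limit of points $y_k\in\ol{Orb(x_k)}$, uses unique ergodicity of $\ol{Orb(x_k)}$ to get $\mu_{y_k}^G=\mu_{x_k}^G$, and derives a contradiction with the continuity of $z\mapsto\mu_z^G$. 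A smaller issue of the same flavor occurs in your $(4)\Rightarrow(3)$: that the limit $\mu_x$ ``must be the unique invariant measure'' on $\ol{Orb(x)}$ needs an argument (e.g.\ via generic points of an arbitrary ergodic measure on $\ol{Orb(x)}$ and the fact that $(4)$ holds for the translated F{\o}lner sequences $\{F_ng\}$), not just the observation that $\mu_x$ is invariant and supported there.
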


   By Theorem \ref{thm1.4}, we have the following.
   \begin{cor}
   	Let $(X,G)$ be a minimal $G$-system.  Then the following are equivalent:
   	\begin{itemize}
   		
   		\item [(\emph{1})]
   		$(X,G)$ is weak mean equicontinuous.		
   		\item [(\emph{2})]
   		All points in $X$ are uniquely  ergodic points and the map $x\to \mu_{x}^G$ is continuous;
   		\item[(\emph{3})] 
   		$(X,G)$ is uniquely ergodic.
   	\end{itemize}
   \end{cor}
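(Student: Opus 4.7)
The plan is to reduce everything to Theorem \ref{thm1.4} and then exploit minimality to collapse statement (2) into statement (3).

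First I would verify that the hypothesis of Theorem \ref{thm1.4} holds in the minimal setting. Pick any $\mu\in M(X,G)$ (which is nonempty since $G$ is amenable and $X$ is compact). Then $\mathrm{supp}(\mu)$ is a nonempty closed $G$-invariant subset of $X$, so by minimality $\mathrm{supp}(\mu)=X$. Consequently the measure center equals $X$, and Theorem \ref{thm1.4} applies, giving immediately the equivalence (1)$\Leftrightarrow$(2) (matching items (1) and (3) of Theorem \ref{thm1.4}).

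Next I would handle the equivalence (2)$\Leftrightarrow$(3), which is essentially a tautology under minimality. For (3)$\Rightarrow$(2): if $(X,G)$ is uniquely ergodic with unique invariant measure $\mu$, then for every $x\in X$ we have $\overline{Orb(x)}=X$ by minimality, so $x$ is a uniquely ergodic point with $\mu_x^G=\mu$. The map $x\mapsto \mu_x^G$ is then the constant map, hence continuous. For (2)$\Rightarrow$(3): if every $x\in X$ is uniquely ergodic, then $(\overline{Orb(x)},G)=(X,G)$ is uniquely ergodic by minimality.

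I do not anticipate any real obstacle: the content of the corollary is that under minimality, Theorem \ref{thm1.4}(3) automatically upgrades to unique ergodicity of the whole system. The only point that might warrant a sentence of care is confirming the measure center coincides with $X$, but this is a direct consequence of $\mathrm{supp}(\mu)$ being a closed invariant set together with minimality.
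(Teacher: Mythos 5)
Your proposal is correct and follows exactly the route the paper intends: the paper derives this corollary directly from Theorem \ref{thm1.4}, with the (unstated) observations that minimality forces the measure center to be all of $X$ and that statements (2) and (3) coincide when every orbit closure is $X$. You have simply made explicit the details the paper leaves to the reader.
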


     \begin{ques}Is the condition  “If the measure center of $(X,G)$ is $X$” necessary in Theorem \ref{thm1.4}?
     \end{ques}
  We call  a sequence of finite nonempty subsets $\{L_n \}_{n=1}^{\infty} $ of $G$ is a \emph{right F{\o}lner sequence}  if $\lim_{n \to +\infty} \frac {|L_n \Delta L_n g|}{|L_n|} = 0$ for every $g \in G$. It is easy to check that $\{L_n \}_{n=1}^{\infty} $ is a right F{\o}lner sequence if and only if 
     $\{L_n^{-1} \}_{n=1}^{\infty} $ is a left F{\o}lner sequence, where $L_n^{-1} = \{g^{-1}: g \in L_n\}$ for all $n \in \N^+$.
    \begin{thm}\label{thm1.5}
    	Let $(X,G)$ be a $G$-system and $\mathcal{L}$ be a left and  right F{\o}lner sequence of $G$.  Then the following statements are equivalent.
    	\begin{itemize}
    		
    		\item [(\emph{1})]
    		$(X,G)$ is weak mean equicontinuous;
    		    		\item [(\emph{2})]
    		$(X,G)$ is weak mean equicontinuous with respect to $\mathcal{L}$;		
    		\item [(\emph{3})]
    	All points in $X$ are uniquely ergodic points and the map $x\to \mu_{x}^G$ is continuous; 
    	\item [(\emph{4})]For any $f\in C(X)$, there exists $f^*\in C(X)$ such that  for any left F{\o}lner sequence $\mathcal{F}=\{F_n\}$ of $G$,
    	$$\frac{1}{|F_n|}\sum_{g\in F_n}f(gx) \text{ converges to }  f^*(x)\text{ uniformly}.$$
    	\end{itemize}
    \end{thm}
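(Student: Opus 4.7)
The plan is to establish the cycle $(1)\Rightarrow(2)\Rightarrow(3)\Rightarrow(4)\Rightarrow(1)$. Only the second implication uses the right F\o lner property of $\mathcal{L}$; the remaining steps are adaptations of classical Oxtoby-type arguments. The implication $(1)\Rightarrow(2)$ is immediate from the definition.

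For $(2)\Rightarrow(3)$, fix $x\in X$. First, for each $g\in G$, the right F\o lner property gives
\begin{equation*}
W(\mu_{gx,L_n},\mu_{x,L_n}) \leq \mathrm{diam}(X)\cdot\frac{|L_n g\triangle L_n|}{|L_n|}\to 0,
\end{equation*}
since $\mu_{gx,L_n}$ is the uniform average of Dirac masses supported on $L_ng\cdot x$. Combined with WME with respect to $\mathcal{L}$, which yields $\limsup_n W(\mu_{y,L_n},\mu_{gx,L_n})<\varepsilon$ whenever $d(y,gx)<\delta$, and the density of the orbit of $x$ in $\ol{Orb(x)}$, I conclude $\lim_n W(\mu_{y,L_n},\mu_{x,L_n})=0$ for every $y\in\ol{Orb(x)}$. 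Now take any $\mu\in M(\ol{Orb(x)},G)$ and any subsequential weak$^*$-limit $\nu$ of $\mu_{x,L_{n_k}}$; the previous step promotes this to $\mu_{y,L_{n_k}}\to\nu$ weakly for every $y\in\ol{Orb(x)}$. For $f\in C(X)$, the $G$-invariance of $\mu$ and bounded convergence yield
\begin{equation*}
\int f\,d\mu = \int \Bigl(\tfrac{1}{|L_{n_k}|}\sum_{g\in L_{n_k}} f(gy)\Bigr)d\mu(y) \xrightarrow{k\to\infty} \int f\,d\nu,
\end{equation*}
hence $\mu=\nu$, proving unique ergodicity of $(\ol{Orb(x)},G)$. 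Continuity of $x\mapsto\mu_x^G$ then follows from the identity $W_\mathcal{L}(x,y) = W(\mu_x^G,\mu_y^G)$ (via $\mu_{x,L_n}\to\mu_x^G$ and the triangle inequality) together with the WME hypothesis.

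For $(3)\Rightarrow(4)$, set $f^*(x):=\int f\,d\mu_x^G$, which is continuous by hypothesis and $G$-invariant since $\mu_{gx}^G=\mu_x^G$. Suppose uniform convergence fails for some left F\o lner $\mathcal{F}=\{F_n\}$; extract $x_k\to x_0$ and a weak$^*$-limit $\nu$ of $\mu_{x_k,F_{n_k}}$ with $|\int f\,d\nu - f^*(x_0)|\geq\varepsilon$. Then $\nu$ is $G$-invariant, and testing with the invariant continuous $f^*$ gives $\int f^*\,d\nu=\lim_k f^*(x_k)=f^*(x_0)$, using that $\tfrac{1}{|F_{n_k}|}\sum_{g\in F_{n_k}} f^*(gx_k)=f^*(x_k)$. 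Ergodic decomposition, pointwise unique ergodicity, and Oxtoby's uniform theorem on each $(\ol{Orb(y)},G)$ yield $\int f\,d\nu=\int f^*\,d\nu$ for any invariant $\nu$, hence $\int f\,d\nu=f^*(x_0)$, contradicting the lower bound.

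For $(4)\Rightarrow(1)$, I upgrade the scalar uniform convergence to uniform Wasserstein convergence: the $1$-Lipschitz functions vanishing at a fixed basepoint form a compact family in $C(X)$ by Arzel\`a--Ascoli, so a finite $\varepsilon$-net argument delivers $\sup_x W(\mu_{x,F_n},\mu_x)\to 0$, where $\mu_x$ is the measure characterized by $\int f\,d\mu_x=f^*(x)$. Then $W_\mathcal{F}(x,y)=W(\mu_x,\mu_y)$ for any left F\o lner $\mathcal{F}$, and this is continuous in $(x,y)$ since $x\mapsto\mu_x$ is weak$^*$-continuous. The hard part is the unique ergodicity portion of $(2)\Rightarrow(3)$: the right F\o lner property is precisely what transports the local Wasserstein control afforded by WME to the whole orbit closure, bypassing the measure center hypothesis of Theorem~\ref{thm1.4}.
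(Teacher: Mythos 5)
Your proof is correct and follows the same cycle $(1)\Rightarrow(2)\Rightarrow(3)\Rightarrow(4)\Rightarrow(1)$ as the paper, with the same key observation driving $(2)\Rightarrow(3)$: the right F{\o}lner property forces $W_\mathcal{L}(x,gx)=0$, after which weak mean equicontinuity spreads this to the whole orbit closure. The implementation, however, is genuinely different and more self-contained at each stage. For $(2)\Rightarrow(3)$ the paper evaluates $W(\mu_{x,L_n},\mu_{gx,L_n})$ through the appendix identity $W(\mu_{x,F_n},\mu_{y,F_n})=\inf_{h\in \mathrm{Aut}(F_n)}\frac{1}{|F_n|}\sum_{g\in F_n}d(gx,g^hy)$ (Birkhoff--von Neumann), and then deduces unique ergodicity of $(\ol{Orb(x)},G)$ from Lemmas \ref{13} and \ref{12}, which rest on Lindenstrauss's pointwise ergodic theorem applied to a tempered subsequence; you instead bound the Wasserstein distance by the obvious coupling that fixes the atoms indexed by $L_ng\cap L_n$ (using $\mu_{gx,L_n}=\mu_{x,L_ng}$), and you obtain unique ergodicity by the classical Oxtoby averaging and bounded-convergence argument, avoiding both the permutation formula and the ergodic theorem. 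For $(3)\Rightarrow(4)$ the paper works with the metric $\rho$, the ergodic decomposition, and a contradiction to the continuity of $z\mapsto\mu_z^G$; your scalar argument, testing the invariant limit $\nu$ against $f$ and the invariant continuous $f^*$, is shorter --- indeed the identity $\int f\,d\nu=\int f^*\,d\nu$ holds for \emph{every} invariant $\nu$ directly by bounded convergence of the averages $\frac{1}{|F_n|}\sum_{g\in F_n}f(gy)\to f^*(y)$, so the ergodic decomposition and Oxtoby's theorem you invoke are not even needed. For $(4)\Rightarrow(1)$ the paper argues by contradiction through $\rho$ and the compatibility of $\rho$ with $W$, while your direct upgrade to $\sup_xW(\mu_{x,F_n},\mu_x)\to0$ via Kantorovich--Rubinstein duality and an Arzel\`a--Ascoli $\varepsilon$-net of $1$-Lipschitz functions is a clean quantitative alternative. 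The paper's route has the advantage of reusing the lemmas already established for Theorem \ref{thm1.4}; yours buys independence from the pointwise ergodic theorem and from the appendix.
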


    \begin{rem}
    	\begin{itemize}
    		\item [(1)]
    		 If $G$ is a finite extension of a countable discrete abelian group, then every left F{\o}lner sequence of $G$ is a right F{\o}lner sequence (see for example \cite{MA}, Section 1.1).Thus, the gruop which satisfies the condition of Theorem \ref{thm1.5} is not need to be abelian group.
    		
    		\item [(2)]
    		For some amenable group $G$, there exist left F{\o}lner sequences of $G$ which are not right F{\o}lner sequences (see for example \cite{MA}, Section 1.1). 
    	\end{itemize}
    \end{rem}

	 This paper is organized as follows. In Section \ref{2}, we introduce some basic notions and results needed in the paper. In Section \ref{4}, we prove Theorem \ref{thm1.2}. In Section \ref{5}, we prove Theorem \ref{thm1.4} and Theorem \ref{thm1.5}. In Appendix \ref{A}, we show that our definition is the same as in \cite{ZZ}.

	\section{Preliminaries}\label{2}
	
	In this section we recall some notions and results of $G$-systems which are needed in our paper. Note that $\N^+$ denotes the set of all positive integers in this paper.

	\subsection{Measures}
	Suppose that $(X,G)$ is a $G$-system. Denote by $\mathcal{B}(X)$ the Borel $\sigma$-algebra of $X$ and $M(X)$ the set of all Borel probability measures on $X$. For $\mu\in M(X)$, denote by $\text{supp} \mu$ the support of $\mu$, i.e. the smallest closed subset $W\subset X$ such that $\mu(W) = 1$. In the weak$^*$-topology, $M(X)$ is a nonempty compact convex space.
	
	\subsubsection{\bf Metrics on measure space}
	 Given $\mu, \nu \in M(X)$, a transport plan from $\mu$ to $\nu$ is a probability measure $\pi$ on the product space $X \times X$ such that 
	\begin{equation*}
		(p_1)_* \pi = \mu \ \ \text{and} \ \ ( p_2 )_* \pi = \nu,
	\end{equation*}
	where $p_1, p_2 : X \times X \to X $ are the canonical projections, and $(p_i)_* \pi = \pi \circ p_i^{-1}$ for $i=1,2$. Let $\Pi (\mu, \nu) $ denote the set of all transport plans from $\mu$ to $\nu$. Then the Wasserstein distance between $\mu$ and $\nu$ is defined as:
	\begin{equation*}
		W(\mu, \nu) = \inf_{\pi \in \Pi (\mu, \nu) } \int d(x,y) \mathrm{d} \pi (x,y).
	\end{equation*}
	The integral in the above formula is called the cost of the transport plan $\pi$, and the infimum is always attained.  The Wasserstein distance is a metric on $M(X)$ and the induced topology is just the weak*-topology on $M(X)$ (see for example \cite{CV},Theorem 7.3 and Theorem 7.12).

	Let  $\{f_n\}_{n=1}^{\infty}$ be a countable dense subset of continuous functions space $C(X)$. For any $\mu, \nu \in M(X)$, define 
	\begin{equation*}
		\rho (\mu, \nu) = \sum_{i=1}^{\infty} \frac {|\int f_i \mathrm{d}{\mu} - \int f_i \mathrm{d}{\nu} |}{2^i ( \|f_i \| + 1)},
	\end{equation*}
	where $\|f_i\| = \max \{|f_i(x)|: x \in X \}$ for all $i \in \mathbb{N^+}$.
	  Then $ \rho$ is a metric on $M(X)$ and the induced topology is also the weak*-topology on $M(X)$ (see for example \cite{PW},Theorem 6.4). Thus, $W(\cdot, \cdot)$ and $\rho(\cdot, \cdot)$ are compatible.

	\subsubsection{\bf Invariant measures}
	We say $\mu \in M(X)$ is $G$-invariant if $\mu (g^{-1} B) = \mu (B)$ holds for all $B \in \mathcal{B}(X)$ and $g \in G$. Denote by $M(X,G)$ the collection of all $G$-invariant Borel probability measures. In the weak$^*$-topology, $M(X,G)$ is a nonempty compact convex space.
	
	We say $B \in \mathcal{B}(X)$ is $G$-invariant if $g^{-1} B = B$ for any $g \in G$.
	And $\mu \in M(X,G)$ is ergodic if for any $G$-invariant Borel set $B \in \mathcal{B}(X)$, $\mu (B) = 0$ or $\mu(B) = 1$ holds. Denote by $M^e (X,G)$ the collection of all ergodic measures on $(X,G)$. It is well known that $M^e (X,G)$ is the collection of all extreme points of $M(X,G)$. Thus, $M^e (X,G)$ is nonempty. Using Choquet representation theorem, for each $\mu \in M(X,G)$ there is a unique measure $\tau$ on the Borel subsets of the  compact space $M(X,G)$ such that $\tau (M^e(X,G)) =1$ and $\mu = \int_{M^e (X,G)} m \mathrm{d} \tau (m)$, which is called the ergodic decomposition of $\mu$.

	We say $(X,G)$ is uniquely ergodic if $M^e (X,G)$ is singleton. Since $M^e (X,G)$ is the set of extreme points of $M(X,G)$, then $(X,G)$ is uniquely ergodic if and only if $M(X,G)$ is singleton.

Similar to Birkhoff pointwise ergodic, Lindenstrauss estabilished the pointwise ergodic theorem on $G$-systems \cite{EL}. 	A left F{\o}lner sequence $ \mathcal{F} = \{F_n \}_{n=1}^{\infty} $ of $G$ is \emph{tempered} if  there exists some $C > 0$ such that 
\begin{equation*}
| \bigcup_{k < n} F_k^{-1} F_n | \le C | F_n |\text{ for all } n \in \mathbb{N}^+.
\end{equation*}
And for every left F{\o}lner sequence, there is a subsequence which is tempered.

   \begin{lemma}[Pointwise Ergodic Theorem]\label{lem1}
   	Let $(X,G)$ be a $G$-system and $\mu$ be a $G$-invariant Borel probability measure. Suppose $ \mathcal{F} = \{F_n \}_{n=1}^{\infty} $ is a tempered F{\o}lner sequence of $G$. Then for any $f \in L^1(X,\mu)$, there is a G-invariant $f^* \in L^1(X,\mu)$ such that 
   	\begin{equation*}
   		\lim_{n \to +\infty} \frac 1{|F_n|} \sum_{g\in F_n} f (g x) = f^* (x) \ \ a.e.
   	\end{equation*} 
   	In particular, if $\mu$ is ergodic, one has
   	\begin{equation*}
   		\lim_{n \to +\infty} \frac 1{|F_n|} \sum_{g\in F_n} f (g x) = \int f(x) \mathrm{d} \mu (x) \ \ a.e.
   	\end{equation*} 
   \end{lemma}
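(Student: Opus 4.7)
The plan is to follow the standard two-step template for pointwise ergodic theorems. First, establish a weak-type $(1,1)$ maximal inequality for
\[
M_\mathcal{F}f(x)=\sup_{n\in\N^+}\frac{1}{|F_n|}\sum_{g\in F_n}|f(gx)|,
\]
then combine it with a.e.\ convergence on a dense subspace of $L^1(X,\mu)$ via the Banach principle. The limit is automatically $G$-invariant; matching integrals against $G$-invariant test functions identifies it with $\E(f\mid\I)$, where $\I$ is the $\sigma$-algebra of $G$-invariant Borel sets. In the ergodic case $\I$ is $\mu$-trivial, and the limit collapses to $\int f\,\mathrm{d}\mu$.

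The main obstacle, and the reason the theorem is attributed to Lindenstrauss, is the maximal inequality under only the tempered hypothesis. The heart of the matter is a Vitali-type covering lemma on $G$: for any finite family of ``balls'' $\{F_{n(g)}g\}_{g\in S}$ with $n(g)\in\N^+$, one can extract $S'\subseteq S$ such that the $\{F_{n(g)}g\}_{g\in S'}$ are pairwise disjoint and
\[
\Bigl|\bigcup_{g\in S'}F_{n(g)}g\Bigr|\ge c\,\Bigl|\bigcup_{g\in S}F_{n(g)}g\Bigr|
\]
for a constant $c>0$ depending only on the tempered constant $C$. I would prove this by a greedy selection in decreasing order of $n(g)$, keeping each ball that meets no previously chosen one; the tempered bound $|\bigcup_{k<n}F_k^{-1}F_n|\le C|F_n|$ is precisely what is needed to charge the overlap created by later rejections to the $|F_n|$ of already-kept balls. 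A standard transference argument, applied to test functions of the form $g\mapsto\mathbf{1}_A(gx)$ on $G$, then converts this combinatorial covering inequality into the weak-type $(1,1)$ bound for $M_\mathcal{F}$ on $(X,\mu)$.

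Given the maximal inequality, the remainder is routine. Take $V\subseteq L^1(X,\mu)$ to be the $L^1$-closure of the sum of the $G$-invariant functions and the coboundaries $\varphi-\varphi\circ g$ with $\varphi\in L^\infty(X,\mu)$ and $g\in G$; that $V=L^1(X,\mu)$ follows from the mean ergodic theorem for amenable group actions, which uses only the Følner property. On invariant functions the averages are constant, while for a coboundary $\varphi-\varphi\circ g$ the averages are bounded in sup-norm by $2\|\varphi\|_\infty\,|gF_n\Delta F_n|/|F_n|\to 0$. Hence a.e.\ convergence holds on a dense subspace. The Banach principle, combined with the maximal inequality, then shows that the set of $f\in L^1$ for which the averages $\frac{1}{|F_n|}\sum_{g\in F_n}f(gx)$ converge a.e.\ is $L^1$-closed and hence equal to $L^1(X,\mu)$. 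Identifying the $G$-invariant limit with $\E(f\mid\I)$ then completes the proof.
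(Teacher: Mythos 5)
The paper does not prove this lemma at all: it is quoted as Lindenstrauss's pointwise ergodic theorem with a citation to \cite{EL}, so there is no internal proof to compare against. Judged on its own terms, your outline has the right global architecture (maximal inequality $+$ dense subspace $+$ Banach principle, then identification of the limit with $\E(f\mid\I)$), and the easy parts are handled correctly: the coboundary estimate via $|gF_n\Delta F_n|/|F_n|\to 0$, the density statement from the mean ergodic theorem, and the collapse to $\int f\,\mathrm{d}\mu$ in the ergodic case are all fine.

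The genuine gap is exactly at the step you call the heart of the matter. A greedy selection in decreasing order of $n(g)$ does not work under the tempered hypothesis alone. If you keep a ball $F_N g_0$ and discard every $F_m h$ with $m\le N$ that meets it, the discarded balls lie in $\bigcup_{m\le N}F_mF_m^{-1}F_Ng_0$, so charging the rejected mass to $|F_N|$ requires a bound of the form $\bigl|\bigcup_{m\le N}F_mF_m^{-1}F_N\bigr|\le C'|F_N|$. That is a Tempelman-type condition, strictly stronger than the tempered (Shulman) condition $\bigl|\bigcup_{k<n}F_k^{-1}F_n\bigr|\le C|F_n|$, which controls only the one-sided product $F_k^{-1}F_n$ and not the two-sided product $F_kF_k^{-1}F_n$ that the overlap of translated balls actually produces. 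This is precisely why Lindenstrauss's theorem was new: his covering lemma is not a Vitali-type disjointification but a randomized selection producing, in expectation, a subfamily that covers a definite fraction of $\bigcup_{g\in S}F_{n(g)}g$ with uniformly bounded expected multiplicity, and the transference argument is run with that weaker (bounded-overlap) conclusion. Indeed, it is not clear that the disjoint-subfamily covering lemma you state is even true for a general tempered F{\o}lner sequence. As written, your proof establishes the theorem only under the stronger Tempelman condition; to prove the lemma as stated you must replace the greedy step by Lindenstrauss's probabilistic covering argument (or simply cite \cite{EL}, as the paper does).
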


    Given a left F{\o}lner sequence $\mathcal{F} = \{ F_n \}_{n=1}^{\infty}$ of $G$ and $x \in X$, we have $\{ \mu_{x, F_n} \}_{n=1}^{\infty} \subset M(X)$. Denote by $M_x^{\mathcal{F}}$ the collection of all limit points of $\{ \mu_{x, F_n} \}_{n=1}^{\infty}$. Since $M(X)$ is compact, we have $M_x^{\mathcal{F}} \not = \emptyset$. Moreover, $M_x^{\mathcal{F}} \subset M(X,G)$. If $M_x^{\mathcal{F}} = \{ \mu \}$, say $x$ is a generic point of $\mu$ with respect to 
    $\mathcal{F}$. And the pointwise ergodic theorem shows that if $\mu \in M^e (X,T)$ and $\mathcal{H}$ is a tempered F{\o}lner sequence of $G$, then the set of all generic points of $\mu$ with respect to   $\mathcal{H}$ has full measure.

    \section{Proof of Theorem \ref{thm1.2}}\label{4}

    In this section, we will prove that $(X,G)$ is weak mean equicontinuous with respect to a left F{\o}lner sequence $\mathcal{F}$ of $G$ if and only if $(X \times X, G)$ is mean equicontinuous with respect to $\mathcal{F}$.
    
    \begin{proof}[Proof of Theorem \ref{thm1.2}] 	Let $(X,G)$ be a $G$-system and $\mathcal{F}=\{F_n\}_{n\in\mathbb{N}}$ be a left F{\o}lner sequence of $G$. Firstly we assume that $(X,G)$ is mean  equicontinuous with respect to $\mathcal{F}$. Then for given $\varepsilon>0$, there exists $\delta>0$ such that 
    	\[
    	\limsup_{n \to +\infty} \frac 1{|F_n|} \sum_{g\in F_n} d(gx, gy) < \varepsilon,
    	\]
    whenever $x,y \in X$ with $d(x,y) < \delta$. 
    Hence for $(x_1,y_1),(x_2,y_2)\in X\times X$ satisfy $d(x_1,x_2)<\delta$ and $d(y_1,y_2)<\delta$, since $\mu_{((x_1,y_1,),(x_2,y_2)),F_n} \in \Pi (\mu_{(x_1, y_1), F_n}, \mu_{(x_2, y_2), F_n})$ for any $n \in \N^+$, 
     one has
    \[
    \begin{split}
    	W_\mathcal{F}((x_1,y_1),(x_2,y_2)) 
    	& = \limsup_{n \to \infty}  W (\mu_{(x_1,y_1),F_n} , \mu_{(x_1,y_1),F_n} ) \\
    	& \le \limsup_{n \to \infty}  \int \tilde d\left((x,y),(x',y')\right)d\mu_{((x_1,y_1,),(x_2,y_2)),F_n} \\
    	& = \limsup_{n \to \infty} \frac 1{|F_n| } \sum_{g \in F_n}  \tilde d \big( g (x_1,y_1), g (x_2,y_2)\big) \\
    	& = \limsup_{n \to +\infty} \frac 1{|F_n|} \sum_{g\in F_n} \left(d(gx_1, gx_2)+d(gy_1, gy_2)\right) \\
    	& < 2\varepsilon.
    \end{split}
    \]
    This implies that $(X\times X,G)$ is weak mean  equicontinuous with respect to $\mathcal{F}$.
    
    Now assume that  $(X\times X,G)$ is weak mean  equicontinuous with respect to $\mathcal{F}$. Then for given $\varepsilon>0$, there exists $\delta>0$ such that 
    if  $(x_1,y_1),(x_2,y_2)\in X\times X$ satisfy $d(x_1,x_2)<\delta$ and $d(y_1,y_2)<\delta$, then
    $W_\mathcal{F}((x_1,y_1), (x_2,y_2))< \varepsilon.$ 
   Hence for $x,y \in X$ with $d(x,y) < \delta$, one has 
\begin{align*}
\epsilon>W_\mathcal{F}((x,y),(y,y))&=\limsup_{n\to\infty}W(\mu_{(x,y),F_n}, \mu_{(y,y),F_n})\\
&\ge \limsup_{n\to\infty}\int_{X\times X} d(x',y')d\mu_{(x,y),F_n}(x',y')\\
&= \limsup_{n \to +\infty} \frac 1{|F_n|} \sum_{g\in F_n} d(gx, gy).
\end{align*}
Therefore, $(X,G)$ is  mean  equicontinuous with respect to $\mathcal{F}$. This ends the proof of Theorem \ref{thm1.2}.
    	\end{proof}

    \section{Proof of Theorem \ref{thm1.4} and Theorem \ref{thm1.5}}\label{5}
   In this section, we study uniquely ergodic points and uniform convergence. Firstly we show some properties of uniquely ergodic points, which are useful to prove Theorem \ref{thm1.4} and Theorem \ref{thm1.5}. 
    \begin{lemma}\label{lem6}
    	Let $(X,G)$ be a $G$-system and $x \in X$ be a uniquely ergodic point.  Then for any left F{\o}lner sequence $\mathcal{F}$ of $G$, we have $\mu_{x,\mathcal{F}}$ exists and $ \mu_{x,\mathcal{F}} = \mu_x^G$.
    \end{lemma}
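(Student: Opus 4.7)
The plan is to show that every subsequential weak$^*$ limit of $\{\mu_{x,F_n}\}_{n=1}^{\infty}$ must equal $\mu_x^G$, and then invoke metrizability of $M(X)$ to conclude that the full sequence converges.

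First I would observe that for each $n \in \mathbb{N}^+$, the empirical measure $\mu_{x,F_n} = \frac{1}{|F_n|}\sum_{g \in F_n} \delta_{gx}$ is supported on $Orb(x)$, hence on the closed set $Y := \overline{Orb(x)}$. Since $Y$ is closed, any weak$^*$ limit point of a sequence of measures supported on $Y$ is again supported on $Y$; equivalently $M_x^{\mathcal{F}} \subset M(Y)$.

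Next, by the fact recorded in the Preliminaries that $M_x^{\mathcal{F}} \subset M(X,G)$ (which is the standard Følner-property consequence of $|gF_n \Delta F_n|/|F_n| \to 0$ for every $g \in G$), each limit point is in fact $G$-invariant. Combining the two containments gives $M_x^{\mathcal{F}} \subset M(Y,G)$. Noting that the restriction of $G$ to $Y$ makes $(Y,G)$ a well-defined $G$-system and that by hypothesis it is uniquely ergodic with unique invariant measure $\mu_x^G$, we obtain $M_x^{\mathcal{F}} \subset \{\mu_x^G\}$. Compactness of $M(X)$ in the weak$^*$ topology forces $M_x^{\mathcal{F}}$ to be non-empty, so $M_x^{\mathcal{F}} = \{\mu_x^G\}$.

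Finally, since $M(X)$ is metrizable (for example by the metric $\rho$ introduced in Section~\ref{2}), a sequence in a compact metric space with a unique subsequential limit converges to that limit. Hence $\mu_{x,F_n} \to \mu_x^G$ in the weak$^*$ topology, which is precisely the assertion that $\mu_{x,\mathcal{F}}$ exists and equals $\mu_x^G$. There is no serious obstacle here; the only point that needs care is the reliance on the Preliminaries fact $M_x^{\mathcal{F}} \subset M(X,G)$, which is itself a direct consequence of the left-Følner condition applied to test functions $f \in C(X)$.
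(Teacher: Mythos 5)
Your proposal is correct and follows essentially the same route as the paper: the paper's (much terser) proof likewise observes that every limit point of $\{\mu_{x,F_n}\}$ lies in $M(\overline{Orb(x)},G)=\{\mu_x^G\}$, so that $M_x^{\mathcal{F}}=\{\mu_x^G\}$ and convergence of the full sequence follows. Your version merely makes explicit the supporting steps (support contained in $\overline{Orb(x)}$, invariance of limit points from the F{\o}lner property, and compactness plus metrizability of $M(X)$) that the paper leaves implicit.
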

    
    \begin{proof}
    	Since $x$ is a uniquely ergodic point, then $M (\ol{Orb(x)}, G) = \{ \mu_x^G\}$. Thus, $M_x^{\mathcal{F}} = \{ \mu_x^G\}$, which implies $\mu_{x,\mathcal{F}}$ exists and $ \mu_{x,\mathcal{F}} = \mu_x^G$.
    \end{proof}
    
     \begin{lemma}\label{12}Let $(X,G)$ be a $G$-system and $\mathcal{F}$ be a left F{\o}lner sequence of $G$. Then the following two statements are equivalent:
    	\begin{itemize}
    		\item[(\emph{1})]  $(X,G)$ is uniquely ergodic;
    		\item[(\emph{2})] $W_\mathcal{F}(x,y)=0$ for all $x, y\in X$.
    	\end{itemize}
    \end{lemma}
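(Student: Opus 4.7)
The plan is to prove the two implications separately, using Lemma \ref{lem6} for one direction and the pointwise ergodic theorem (Lemma \ref{lem1}) for the other. Both steps rely on the fact, recalled in Section \ref{2}, that the Wasserstein metric $W$ induces the weak$^*$-topology on $M(X)$.

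For (1) $\Rightarrow$ (2), I would first observe that unique ergodicity of $(X,G)$ forces every $x \in X$ to be a uniquely ergodic point with the same measure $\mu_x^G = \mu$, because $M(\ol{Orb(x)}, G)$ is a nonempty subset of $M(X,G) = \{\mu\}$. Lemma \ref{lem6} then gives $\mu_{x,F_n} \to \mu$ in the weak$^*$-topology, hence in $W$, for every $x \in X$. Applying the triangle inequality for $W$ through $\mu$ yields $W(\mu_{x,F_n}, \mu_{y,F_n}) \to 0$ for all $x, y \in X$, so $W_\mathcal{F}(x,y) = 0$.

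For (2) $\Rightarrow$ (1), I would argue contrapositively. If $(X,G)$ is not uniquely ergodic, then since $M^e(X,G)$ is the set of extreme points of the convex set $M(X,G)$, there exist two distinct ergodic measures $\mu_1, \mu_2 \in M^e(X,G)$. The sequence $\mathcal{F}$ need not be tempered, so I would first extract a tempered subsequence $\mathcal{F}' = \{F_{n_k}\}_{k=1}^{\infty}$ of $\mathcal{F}$, which exists by the remark preceding Lemma \ref{lem1}. Applying Lemma \ref{lem1} to $\mu_i$ ($i=1,2$) for each function in a countable dense subset of $C(X)$ and intersecting the resulting full-measure sets, I obtain a point $x_i$ that is generic for $\mu_i$ along $\mathcal{F}'$; in particular $\mu_{x_i, F_{n_k}} \to \mu_i$ in $W$ as $k \to \infty$. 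Then
$$W_\mathcal{F}(x_1, x_2) \;\geq\; \lim_{k \to \infty} W\bigl(\mu_{x_1, F_{n_k}}, \mu_{x_2, F_{n_k}}\bigr) \;=\; W(\mu_1, \mu_2) \;>\; 0,$$
contradicting (2).

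No serious obstacle is anticipated. The only subtlety is that the given $\mathcal{F}$ may fail to be tempered, which is handled by passing to a tempered subsequence and using that $\limsup$ along $\mathcal{F}$ dominates the limit along any subsequence.
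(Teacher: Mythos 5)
Your proposal is correct and follows essentially the same route as the paper: Lemma \ref{lem6} (every point is a uniquely ergodic point for the single invariant measure) gives $(1)\Rightarrow(2)$, and for $(2)\Rightarrow(1)$ one passes to a tempered subsequence, picks generic points for two distinct ergodic measures via the pointwise ergodic theorem, and uses that the $\limsup$ along $\mathcal{F}$ dominates the limit along the subsequence. The extra details you supply (the identification $M(\ol{Orb(x)},G)\subset M(X,G)$ and the countable dense subset of $C(X)$ used to produce generic points) are exactly the steps the paper leaves implicit.
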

\begin{proof}(\emph{1}) $\Rightarrow$ (\emph{2}) 
	Let $\mu$ be a uniquely ergodic measure on $(X,G)$. Given $x,y \in X$, by Lemma \ref{lem6} we have $\mu_{x, \mathcal{F}} = \mu_{y, \mathcal{F}}  = \mu$. Thus,
	\[
	W_\mathcal{F}(x,y)= W (\mu_{x, \mathcal{F}}, \mu_{y, \mathcal{F}}) = W (\mu, \mu) = 0.
	\]
	
	(\emph{2}) $\Rightarrow$ (\emph{1})  
	Let $ \mathcal{H}$ be a tempered subsequence of $\mathcal{F}$. Assume that  $(X,G)$ is not uniquely ergodic, then there exist distinct ergodic measures $\mu$ and $\nu$ in $M(X,G)$. By pointwise ergodic theorem, there is $x\in X$  (resp. $y \in X$) which is generic point of $\mu$ (resp. $\nu$)
	with respect to $ \mathcal{H}$. Then 
	$$W_\mathcal{F}(x,y)\ge W_\mathcal{H}(x,y)=W(\mu,\nu)>0.$$
	This is impossible. Therefore, $(X,G)$ is uniquely ergodic. 
	\end{proof}

    \begin{lemma}\label{13}
    	Let $(X,G)$ be a  weak  mean  equicontinuous $G$-system with respect to a left F{\o}lner sequence  $\mathcal{F}$ of $G$. For $x\in X$, the following two statements are equivalent:
    	\begin{itemize}
    		\item[(\emph{1})] $x$ is a uniquely ergodic point;
    		\item[(\emph{2})] $W_\mathcal{F}(x,gx)=0$ for all $g\in G$.
    	\end{itemize}
    	\end{lemma}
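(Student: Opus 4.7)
The plan is to exploit two ingredients: Lemma \ref{lem6}, which identifies the limit $\mu_{x,\mathcal{F}}$ with $\mu_{x}^G$ at any uniquely ergodic point, and Lemma \ref{12}, which characterizes unique ergodicity via vanishing of $W_\mathcal{F}$. The weak mean equicontinuity hypothesis enters only to upgrade ``$W_\mathcal{F}(x,gx)=0$ for all $g$'' to ``$W_\mathcal{F}$ vanishes on the whole orbit closure,'' via continuity of $W_\mathcal{F}$.

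For the direction (1) $\Rightarrow$ (2), I would first note that the orbit of any point $gx$ satisfies $Orb(gx)=\{hgx:h\in G\}\subseteq Orb(x)$, while $x=g^{-1}(gx)\in Orb(gx)$ gives the reverse inclusion; hence $\overline{Orb(gx)}=\overline{Orb(x)}$. Therefore $gx$ is also a uniquely ergodic point with $\mu_{gx}^G=\mu_{x}^G$. By Lemma \ref{lem6} (applied to both $x$ and $gx$ with the sequence $\mathcal{F}$) the limits $\mu_{x,\mathcal{F}}$ and $\mu_{gx,\mathcal{F}}$ exist and both equal $\mu_{x}^G$. Consequently
\[
W_\mathcal{F}(x,gx)=\limsup_{n\to\infty} W(\mu_{x,F_n},\mu_{gx,F_n})=W(\mu_{x}^G,\mu_{x}^G)=0.
\]

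For the direction (2) $\Rightarrow$ (1), the key observation is that weak mean equicontinuity with respect to $\mathcal{F}$ is equivalent to continuity of $W_\mathcal{F}$ on $X\times X$ (as stated in the introduction). Fixing $x$ and viewing $W_\mathcal{F}(x,\cdot)$ as a continuous function of its second argument, the hypothesis $W_\mathcal{F}(x,gx)=0$ for every $g\in G$ means that this function vanishes on $Orb(x)$, and hence by continuity on $\overline{Orb(x)}$. Thus $W_\mathcal{F}(x,y)=0$ for every $y\in\overline{Orb(x)}$. Then for any $y,z\in\overline{Orb(x)}$ the triangle inequality for the pseudometric $W_\mathcal{F}$ gives
\[
W_\mathcal{F}(y,z)\le W_\mathcal{F}(y,x)+W_\mathcal{F}(x,z)=0.
\]
Applying Lemma \ref{12} to the subsystem $(\overline{Orb(x)},G)$ with the same Følner sequence $\mathcal{F}$, we conclude that $(\overline{Orb(x)},G)$ is uniquely ergodic, i.e. $x$ is a uniquely ergodic point.

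There is no real obstacle here; the only point requiring minor care is the passage from ``vanishing on the orbit'' to ``vanishing on the orbit closure,'' which is exactly where the continuity provided by weak mean equicontinuity is used, and the verification that $\overline{Orb(gx)}=\overline{Orb(x)}$, so that $\mu_{gx}^G$ makes sense and coincides with $\mu_x^G$.
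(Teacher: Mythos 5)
Your proposal is correct and follows essentially the same route as the paper: the direction (2) $\Rightarrow$ (1) is word-for-word the paper's argument (continuity of $W_\mathcal{F}$ extends the vanishing from $Orb(x)$ to $\overline{Orb(x)}$, then the triangle inequality and Lemma \ref{12} applied to the subsystem), and your (1) $\Rightarrow$ (2) via Lemma \ref{lem6} is just an unpacking of the paper's one-line appeal to Lemma \ref{12}, whose proof of that implication is itself via Lemma \ref{lem6}. No gaps.
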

    \begin{proof}
    	(\emph{1}) $\Rightarrow$ (\emph{2}) is immediately from Lemma \ref{12}.
    	
    	(\emph{2}) $\Rightarrow$ (\emph{1})
    	 Since $W_\mathcal{F}(x,gx)=0$ for all $g\in G$ and $(X,G)$ is weak  mean  equicontinuous with respect to $\mathcal{F}$, one has  $W_\mathcal{F}(x,y)=0$ for all $y\in \overline{Orb(x)}$. Then for all $y_1,y_2\in \overline{Orb(x)}$, one has 
    	$$W_\mathcal{F}(y_1,y_2)\le W_\mathcal{F}(x,y_1)+W_\mathcal{F}(x,y_2)=0.$$ Hence, $W_\mathcal{F}(y_1,y_2)=0$ for all $y_1,y_2\in \overline{Orb(x)}$.  By Lemma \ref{12}, $(\overline{Orb(x)},G)$ is  uniquely ergodic. Hence, $x$ is a uniquely ergodic point.
    	\end{proof}
    
        \begin{lemma}\label{132}
        	Let $(X,G)$ be a  weak  mean  equicontinuous $G$-system with respect to a  left F{\o}lner sequence  $\mathcal{F}$ of $G$. If $\mu\in M^e(X,G)$, then $(\emph{supp}\mu,G)$  is uniquely ergodic. Thus all the points in $\emph{supp}\mu$ are uniquely ergodic points. 
    \end{lemma}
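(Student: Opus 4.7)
The plan is to prove the stronger statement that $\mu_{x,F_n}\to\mu$ in the weak$^*$-topology for every $x\in \text{supp}\,\mu$. Granting this, $W(\mu_{x,F_n},\mu_{y,F_n})\to W(\mu,\mu)=0$ for all $x,y\in\text{supp}\,\mu$, so $W_\mathcal{F}\equiv 0$ on $\text{supp}\,\mu\times\text{supp}\,\mu$; applying Lemma \ref{12} to the subsystem $(\text{supp}\,\mu, G)$ then yields its unique ergodicity. For the final assertion, any $x\in \text{supp}\,\mu$ satisfies $\overline{Orb(x)}\subseteq \text{supp}\,\mu$, while the convergence $\mu_{x,F_n}\to\mu$ forces $\text{supp}\,\mu\subseteq \overline{Orb(x)}$; hence $(\overline{Orb(x)},G)=(\text{supp}\,\mu,G)$ is uniquely ergodic, so $x$ is a uniquely ergodic point.

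To establish the stronger claim, fix $x\in\text{supp}\,\mu$ and let $\nu$ be any weak$^*$ limit point of $\{\mu_{x,F_n}\}$; by compactness of $M(X)$ it suffices to show $\nu=\mu$. Choose a subsequence with $\mu_{x,F_{n_k}}\to\nu$ and extract a tempered sub-subsequence $\mathcal{H}=\{F_{n_{k_l}}\}_{l}$. Since $\mu$ is ergodic, the pointwise ergodic theorem (Lemma \ref{lem1}) applied along $\mathcal{H}$ yields a $\mu$-full-measure set $A$ on which $\mu_{y,F_{n_{k_l}}}\to\mu$; because $\mu(\text{supp}\,\mu)=1$, every neighbourhood of $x$ meets $A$. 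Given $\varepsilon>0$, weak mean equicontinuity with respect to $\mathcal{F}$ supplies $\delta>0$ with $W_\mathcal{F}(x,y)<\varepsilon$ whenever $d(x,y)<\delta$; picking $y\in A$ with $d(x,y)<\delta$ and letting $l\to\infty$, continuity of $W$ gives
\[
W(\nu,\mu) \;=\; \lim_{l\to\infty} W\bigl(\mu_{x,F_{n_{k_l}}},\,\mu_{y,F_{n_{k_l}}}\bigr) \;\le\; \limsup_{n\to\infty} W(\mu_{x,F_n},\mu_{y,F_n}) \;=\; W_\mathcal{F}(x,y) \;<\; \varepsilon.
\]
As $\varepsilon$ was arbitrary, $\nu=\mu$, completing the argument.

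The main obstacle is precisely this upgrade from convergence along a tempered subsequence (which is all the pointwise ergodic theorem delivers, and which on its own only gives $W_\mathcal{H}(x,y)=0$ rather than $W_\mathcal{F}(x,y)=0$) to convergence along the full F\o lner sequence $\mathcal{F}$. Weak mean equicontinuity with respect to $\mathcal{F}$ is the exact bridge: it forces the averages at $x$ to remain close in Wasserstein distance to those at nearby generic points along every subsequence of $\mathcal{F}$, which is what allows us to pin down the limit $\nu$.
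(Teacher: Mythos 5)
Your proof is correct, but it takes a genuinely different route from the paper's. The paper argues by contradiction: assuming $(\mathrm{supp}\,\mu,G)$ is not uniquely ergodic, it produces a second ergodic measure $\nu$ with $\mathrm{supp}\,\nu\subset\mathrm{supp}\,\mu$, picks generic points $x_m$ of $\mu$ (with respect to a tempered subsequence $\mathcal{H}$ of $\mathcal{F}$) converging to a generic point $y$ of $\nu$, and derives the contradiction $W_\mathcal{F}(x_m,y)\ge W_\mathcal{H}(x_m,y)=W(\mu,\nu)>0$ while $x_m\to y$. You instead prove directly the stronger assertion that $\mu_{x,F_n}\to\mu$ along the \emph{entire} sequence $\mathcal{F}$ for every $x\in\mathrm{supp}\,\mu$, by pinning down an arbitrary weak$^*$ limit point of $\{\mu_{x,F_n}\}$ via a tempered sub-subsequence, the density of $\mathcal{H}$-generic points in $\mathrm{supp}\,\mu$, and the $\varepsilon$--$\delta$ transfer supplied by weak mean equicontinuity; unique ergodicity then drops out via Lemma \ref{12} applied to the subsystem. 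Both arguments rest on the same ingredients (tempered subsequences, Lemma \ref{lem1}, full measure of the generic points, and the continuity of $W_\mathcal{F}$), but yours buys a sharper intermediate conclusion --- every point of $\mathrm{supp}\,\mu$ is generic for $\mu$ with respect to $\mathcal{F}$ itself, and $\overline{Orb(x)}=\mathrm{supp}\,\mu$ for each such point --- at the cost of being somewhat longer, whereas the paper's contradiction argument is shorter and needs only the subsequence inequality $W_\mathcal{F}\ge W_\mathcal{H}$. All the small steps you use check out: a subsequence of a left F{\o}lner sequence is again a left F{\o}lner sequence and admits a tempered subsequence, every neighbourhood of a point of $\mathrm{supp}\,\mu$ meets any $\mu$-full set, and $W$ metrizes the weak$^*$ topology, so the limit $W(\nu,\mu)=\lim_l W(\mu_{x,F_{n_{k_l}}},\mu_{y,F_{n_{k_l}}})\le W_\mathcal{F}(x,y)$ is legitimate.
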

\begin{proof}
	Assume that $(\text{supp}\mu,G)$ is not uniquely ergodic. Then there exists $\nu\in M^e(X,G)$ with $\mu \not= \nu$ such that 
	$$\text{supp}\nu\subset \text{supp}\mu.$$
	 Let $ \mathcal{H}$ be a tempered subsequence of $\mathcal{F}$. Then there exist $x_m,y$ such that
	 $$\mu_{x_m,\mathcal{H}}=\mu\text{ for any }m\in\mathbb{N}^+, \mu_{y,\mathcal{H}}=\nu\text{ and }\lim_{m\to\infty} x_m=y.$$
	 Thus,
	 $$W_\mathcal{F}(x_m,y)\ge W_\mathcal{H}(x_m,y)=W(\mu,\nu)>0\text{ for any }m\in\mathbb{N}^+.$$
	This contradicts with  the condition that $(X,G)$ is weak  mean  equicontinuous with respect to $\mathcal{F}$. Hence, $(\text{supp}\mu,G)$ is uniquely ergodic.
	\end{proof}

Now we prove Theorem \ref{thm1.4}.
\begin{proof}[Proof of Theorem \ref{thm1.4}]
	
	We will show that (\emph{1}) $\Rightarrow$ (\emph{2}) $\Rightarrow$ (\emph{3}) $\Rightarrow$ (\emph{4}) $\Rightarrow$ (\emph{1}).
	
	    (\emph{1}) $\Rightarrow$ (\emph{2}) is obvious.

		(\emph{2}) $\Rightarrow$ (\emph{3}) 
		 Assume that $(X,G)$ is weak  mean  equicontinuous with respect to a left F{\o}lner sequence $\mathcal{F}$. Lemma \ref{132} shows that all points in $\cup_{\mu\in M^e(X,G)}\text{supp}\mu$ are uniquely ergodic points. And by Lemma \ref{13}, one has $W_\mathcal{F}(x,gx)=0$ for every $g\in G$ and $x\in \cup_{\mu\in M^e (X,G)}\text{supp}\mu$. Then by the weak equicontinuity, one has $W_\mathcal{F}(x,gx)=0$ for all $g\in G$ and $x\in \overline{\cup_{\mu\in M^e(X,G)}\text{supp}\mu}$. Thus, all points in $\overline{\cup_{\mu\in M^e(X,G)}\text{supp}\mu}$ are uniquely ergodic points by  Lemma \ref{13}.
		 
		 For $x \in X$, one has $x \in \overline{\cup_{\mu\in M(X,G)}\text{supp}\mu} = \overline{\cup_{\mu\in M^e(X,G)}\text{supp}\mu}$, which shows that $x$ is a uniquely ergodic point. Thus by Lemma \ref{lem6}, one deduces that $\mu_{x,\mathcal{F}} $ exists and $\mu_{x,\mathcal{F}} = \mu_x^G$.
The  continuity of the map $x\to \mu_x^G$ is immediately from the assumption that $(X,G)$ is weak  mean  equicontinuous.

	(\emph{3}) $\Rightarrow$ (\emph{4}) 
	Given a left F{\o}lner sequence $\mathcal{F}=\{F_n\}_{n=1}^\infty$ of $G$, one has that
	$$\lim_{n\to\infty}\mu_{x,F_n}=\mu_x^G\text{ for any }x\in X.$$
		To our aim, it is enough to show that the convergence is uniformly. Assume that the convergence is not uniformly.  Then there exists $\varepsilon>0,x_i \in X$ and $n_i\to\infty$ such that  
		$$\rho(\mu_{x_i,F_{n_i}},\mu_{x_i}^G) \ge \varepsilon\text{ for all }i\in\mathbb{N}^+.$$
		Passing by a subsequence, we can assume that 
		$$\lim_{i \to \infty} x_i = x^* \ \text{and} \ \lim_{i\to\infty}\mu_{x_i,F_{n_i}} = \mu.$$
    	It is clear that $\mu\in M (X,G)$ and
    	\[
    	\rho (\mu, \mu_{x^*}^G)  =  \rho ( \lim_{i \to \infty} \mu_{x_i,F_{n_i}} , \lim_{i \to \infty}  \mu_{x_i}^G) = \lim_{i \to \infty} \rho(\mu_{x_i,F_{n_i}},\mu_{x_i}^G) \ge \varepsilon.
    	\]
    	By the ergodic decomposition theorem, there exists $\tilde \mu\in M^e(X,G)$ such that 
		$$\rho(\tilde\mu,\mu_{x^*}^G)\ge \varepsilon\text{ and }\text{supp}\tilde \mu\subset \text{supp} \mu.$$
		Fix $y\in \text{supp}\tilde \mu$, then $\tilde \mu = \mu_y^G$. And there exists $y_i\in\overline{Orb(x_i)},i\in\mathbb{N}^+$ such that 
	\begin{align}\label{1234}\lim_{i\to\infty}y_i=y.\end{align} 
		For $i \in \N^+$, since $\overline{Orb(x_i)}$ is uniquely ergodic, we have $\mu^G_{x_i} = \mu^G_{y_i} $. Thus,
			$$\limsup_{i\to\infty}\rho(\mu_{y_i}^G,\mu_y^G)=\limsup_{i\to\infty}\rho(\mu_{x_i}^G,\mu_y^G)=\rho(\mu_{x^*}^G,\tilde \mu)\ge \varepsilon.$$ 
			Combining this with \eqref{1234}, the map $z\to\mu_z^G$ is not continuous which is  contradictory to  	(\emph{3}). Thus, (\emph{3}) $\Rightarrow$ (\emph{4}) is valid. 
			
			(\emph{4}) $\Rightarrow$ (\emph{1}) 
			Given a left F{\o}lner sequence $\mathcal{F} = \{F_n\}_{n=1}^{\infty}$ of $G$, then by (4) for any $x \in X$, $\mu_{x,\mathcal{F}}$ exists. Now we assume that $(X,G)$ is not weak mean equicontinuous with respect to $\mathcal{F}$. Then there are $\varepsilon > 0$ and $y_m , y \in X$ with $y_m \to y$ such that
			$$W_{\mathcal{F}} (y_m, y) > \varepsilon.$$ This implies that $W (\mu_{y_m, \mathcal{F}}, \mu_{y, \mathcal{F}}) > \varepsilon$ for any $m \in \N^+$. Since $W(\cdot, \cdot)$ and $\rho (\cdot, \cdot)$ are compatible, there is $\delta > 0$ such that $\rho (\mu_{y_m, \mathcal{F}}, \mu_{y, \mathcal{F}}) > \delta$ for any $m \in \N^+$. Passing by a subsequence, there exist  $\delta'\in(0,\infty)$ and $f\in C(X)$ such that
			$$|\int_Xfd\mu_{y,\mathcal{F}}-\int_Xfd\mu_{y_m,\mathcal{F}}|>\delta'\text{ for all }m\in\mathbb{N}^+.$$			
			Then 
				$$|f^*(y)-f^*(y_m)|>\delta'\text{ for all }m\in\mathbb{N}^+.$$
			Since $\lim_{m\to\infty}y_m=y$, one has $f^*$ is not continuous. This contradicts with the assumption.  Hence, $(X,G)$ is weak mean equicontinuous.
	\end{proof} 

    To prove Theorem \ref{thm1.5}, we need only to show (\emph{2}) $\Rightarrow$ (\emph{3}), for the proof of  (\emph{3}) $\Rightarrow$ (\emph{4}) $\Rightarrow$ (\emph{1}) $\Rightarrow$ (\emph{2}) is as same as the proof of Theorem \ref{thm1.4}.

    \begin{proof}[Proof of Theorem \ref{thm1.5}]
    	
    	(\emph{2}) $\Rightarrow$ (\emph{3}) 
   
    	Fix $x\in X$ and assume that $\mathcal{L}=\{L_n\}_{n=1}^\infty$. Then for $g \in G$, by Theorem \ref{AN} one has 
    	$$W_\mathcal{L}(x,gx)=\limsup_{n\to\infty}W(\mu_{x,L_n},\mu_{gx,L_n})=\limsup_{n\to\infty} \inf_{ h \in Aut(L_n)} \frac 1{|L_n|} \sum_{l \in L_n} d(l x, l^h g x) = 0,$$
    	where $\text{Aut}(F)$ is the  permutation group of $F$, and the last equality comes from that $\mathcal{L}$ is right F{\o}lner sequence,
    	Then by  Lemma \ref{13}, one has that $x$ is a uniquely ergodic point. With the same reason in Theorem \ref{thm1.4}, one has that map $x \to \mu_x^G$ is continuous.
    	\end{proof}	
    
     \appendix
    \renewcommand{\appendixname}{Appendix~\Alph{section}}
    \section{}\label{A}
    
    In this section, we will prove our definition of weak mean  equicontinuity is the same as in \cite{ZZ}. To our aim, we need some definitions and lemmas which are stated as follow.
    
    Let $(X,G)$ be a $G$-system and $\mathcal{F}=\{F_n\}_{n=1}^\infty$ be a left F{\o}lner sequence  of $G$. For any $x,y\in X$, define
    $$W_\mathcal{F}^{\text{Aut}}(x,y)=\limsup_{n\to\infty}\inf_{h\in \text{Aut}(F_n)} \frac 1{|F_n|}\sum_{g\in F_n} d(gx,g^hy),$$
    where $\text{Aut}(F)$ is the  permutation group of $F$ and $g^h=h(g)$.
    
    Our main result in this section is the following.
    \begin{thm}\label{AN}Let $(X,G)$ be a $G$-system and $\mathcal{F}=\{F_n\}_{n=1}^\infty$ be a left F{\o}lner sequence  of $G$. Then $W_\mathcal{F}^{\text{Aut}}(x,y)=W_\mathcal{F}(x,y)$ for every $x,y\in X$.
    \end{thm}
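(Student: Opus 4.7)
The plan is to prove the stronger pointwise identity
$$W(\mu_{x,F_n},\mu_{y,F_n}) \;=\; \inf_{h \in \text{Aut}(F_n)} \frac{1}{|F_n|} \sum_{g \in F_n} d(gx,\, g^h y)$$
for every fixed $n \in \N^+$; taking $\limsup_{n\to\infty}$ of both sides then yields Theorem \ref{AN} directly. So the whole task reduces to a statement about optimal transport between two empirical probability measures with $|F_n|$ atoms each (possibly with repetitions).

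The direction $\leq$ is the easy half. Every $h \in \text{Aut}(F_n)$ induces the coupling
$$\pi_h \;=\; \frac{1}{|F_n|}\sum_{g \in F_n} \delta_{(gx,\, g^h y)} \;\in\; \Pi(\mu_{x,F_n},\mu_{y,F_n}),$$
where the second marginal equals $\mu_{y,F_n}$ precisely because $h$ is a bijection of $F_n$. The cost of $\pi_h$ is $\frac{1}{|F_n|}\sum_{g\in F_n} d(gx, g^h y)$, so $W(\mu_{x,F_n},\mu_{y,F_n})$ is at most the infimum over $h$.

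The main obstacle is the reverse inequality, and the key tool is the Birkhoff–von Neumann theorem. I would list the distinct atoms of $\mu_{x,F_n}$ as $u_1,\ldots,u_k$ with multiplicities $m_i = \#\{g \in F_n : gx = u_i\}$, and similarly $v_1,\ldots,v_\ell$ with multiplicities $n_j$ for $\mu_{y,F_n}$. Then any $\pi \in \Pi(\mu_{x,F_n},\mu_{y,F_n})$ is encoded by the numbers $q_{ij}=\pi(\{u_i\}\times\{v_j\})$ subject to $\sum_j q_{ij} = m_i/|F_n|$ and $\sum_i q_{ij} = n_j/|F_n|$. Lift $q$ to a matrix $P$ indexed by $F_n \times F_n$ by setting $P_{g,g'} = q_{ij}/(m_i n_j)$ whenever $gx = u_i$ and $g'y = v_j$; a short check shows that $|F_n|\,P$ is doubly stochastic and that $\sum_{g,g'} P_{g,g'}\,d(gx,g'y) = \sum_{i,j} q_{ij}\, d(u_i,v_j) = \int d\,d\pi$. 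Decomposing $|F_n|\,P = \sum_t \alpha_t \Pi_{h_t}$ as a convex combination of permutation matrices via Birkhoff–von Neumann exhibits $\int d\,d\pi$ as the same convex combination of the quantities $\frac{1}{|F_n|}\sum_{g\in F_n} d(gx, g^{h_t} y)$, so at least one index $t$ satisfies $\frac{1}{|F_n|}\sum_g d(gx, g^{h_t} y) \leq \int d\,d\pi$. Infimising over $\pi$ completes this direction.

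Combining the two inequalities produces the per-$n$ identity, and taking $\limsup_{n\to\infty}$ concludes. Note that the right F{\o}lner hypothesis plays no role in this appendix; it is invoked separately in the proof of Theorem \ref{thm1.5}, where the identity established here permits passing from $\inf_{h\in \text{Aut}(L_n)}\frac{1}{|L_n|}\sum_{l\in L_n} d(lx, l^h gx)$ to $0$.
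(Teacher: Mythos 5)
Your proposal is correct and follows essentially the same route as the paper's appendix: reduce the Wasserstein distance between the two empirical measures to an optimization over doubly stochastic matrices indexed by $F_n\times F_n$ and invoke the Birkhoff--von Neumann theorem to pass to permutations, then take $\limsup$ of the resulting per-$n$ identity. Your explicit lift $P_{g,g'}=q_{ij}/(m_i n_j)$ is precisely the verification of the surjectivity of the map $\Psi$ that the paper asserts as ``easy to check,'' so if anything your write-up supplies a detail the paper omits.
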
 
    
   \begin{defn}
   	An $n \times n$ matrix $A = (a_{ij}) : i,j = 1, 2, \cdots, n $ is called doubly stochastic provided it is non-negative and the sum of entries in every row and every column is 1. And denote by $DM(n)$ the set of all $n \times n$ doubly stochastic matrices.
   \end{defn}
   
   Given $A \in DM(|F_n|)$. For $z_1, z_2 \in X$, put
   \[
   \pi_A (z_1, z_2) = \frac 1{|F_n|} \sum_{g_1 x = z_1, g_2 y = z_2 } A(g_1,g_2).
   \]
   Then $\pi_A \in \Pi (\mu_{x,F_n},\mu_{y,F_n})$. 
   Define $\Psi : DM (| F_n |) \la \Pi (\mu_{x,F_n},\mu_{y,F_n})$ such that $\Psi (A) = \pi_A$. It is easy to check that $\Psi$ is surjective.

   \begin{defn}
   	Let $\sigma \in S_n$, where $S_n$ is the $n$-order permutation group. The permutation matrix $A^{\sigma}$ is the $n \times n$ matrix $A^{\sigma} = (a^{\sigma}_{ij}) : i,j =1,2, \cdots, n$, defined as follows:
   	\begin{equation*}
   		a^{\sigma}_{ij} =
   		\begin{cases}
   			1 \ \ \text{if} \ \ \sigma(j) = i, \\
   			0 \ \ \text{otherwise}.
   		\end{cases}
   	\end{equation*}
   	And denote by $PM(n)$ the set of all $n \times n$ permutation matrices.
   \end{defn}
   
   For $A \in PM(|F_n|)$, there is $h \in Aut(F_n)$ such that 
   $
   \Psi (A) = \frac 1{|F_n|} \sum_{g \in F_n} \delta_{(gx, g^h y)}.
   $
   On the other hand, for $h \in Aut (F_n)$, let $\pi_h = \frac 1{|F_n|} \sum_{g \in F_n} \delta_{(gx, g^h y)}$ and
   \[
   A_h (g_1, g_2) =
   \begin{cases}
   	1 \ \text{ if } g_2 = g_1^h \\
   	0 \ \text{ otherwise}.
   \end{cases}
   \]
  Then $A_h \in PM (|F_n|)$ and $\Psi (A_h) = \pi_h$. Thus,
   \begin{equation*}
   	\Psi \big( PM (|F_n|) \big) = \{\frac 1{|F_n|} \sum_{g \in F_n} \delta_{(gx, g^h y)} : h \in Aut (F_n) \}.
   \end{equation*}
   
   The relation between $DM(n)$ and $PM(n)$ was shown by Birkhoff and Von Neumann (see for example \cite{AB}, Chapter 2, Theorem 5.2).
   \begin{lemma}[Birkhoff-Von Neumann Theorem]
   	The extreme points of $DM(n)$ are exactly the $n \times n$ permutation matrices $PM(n)$.
   \end{lemma}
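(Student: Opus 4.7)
The plan is to establish the two inclusions separately: every permutation matrix is an extreme point of $DM(n)$ (the easy direction), and every extreme point of $DM(n)$ is a permutation matrix (the substantive direction).

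For the easy direction, I would suppose $A^\sigma \in PM(n)$ admits a representation $A^\sigma = \lambda B + (1-\lambda) C$ with $B, C \in DM(n)$ and $\lambda \in (0,1)$. At every position $(i,j)$ with $\sigma(j) \neq i$ we have $a^\sigma_{ij} = 0$, and since the entries of $B$ and $C$ are non-negative, this forces $b_{ij} = c_{ij} = 0$ there. The remaining entries of $B$ (respectively $C$) must then fill each row and column to sum $1$, which only leaves $B = C = A^\sigma$. Hence $A^\sigma$ is extreme.

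For the substantive direction, I would argue by contradiction: suppose $A = (a_{ij}) \in DM(n)$ is extreme but is not a permutation matrix. The key tool is Hall's marriage theorem applied to the bipartite graph $G_A$ on $[n] \sqcup [n]$ where row $i$ is joined to column $j$ precisely when $a_{ij} > 0$. Hall's condition follows from double stochasticity: for any $S \subseteq [n]$, the total mass in the rows indexed by $S$ is $|S|$, and since each column sums to $1$, the neighborhood $N(S)$ must receive at least $|S|$ units of mass, so $|N(S)| \geq |S|$. Hall's theorem therefore yields a perfect matching, i.e.\ a permutation $\sigma \in S_n$ with $a_{i\sigma(i)} > 0$ for every $i$. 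Set $\varepsilon = \min_i a_{i\sigma(i)}$; since $A$ is not a permutation matrix, $\varepsilon < 1$. Then $B := (1-\varepsilon)^{-1}(A - \varepsilon A^\sigma)$ has non-negative entries (by choice of $\varepsilon$) and row and column sums all equal to $1$, so $B \in DM(n)$. The decomposition $A = \varepsilon A^\sigma + (1-\varepsilon) B$ contradicts extremality of $A$, completing the proof.

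The main obstacle is the deployment of Hall's theorem; once the marriage condition is extracted from double stochasticity, the remainder is a direct algebraic check that $B$ lies in $DM(n)$. An alternative route is induction on the number of positive entries of $A$: subtracting $\varepsilon A^\sigma$ strictly lowers this count (at least one entry drops to zero by the minimality of $\varepsilon$), yielding the stronger assertion that every doubly stochastic matrix is a convex combination of permutation matrices, after which the extreme-point characterization is immediate.
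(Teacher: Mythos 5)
Your proof is correct. Note, however, that the paper does not prove this lemma at all: it is quoted as a known result (Birkhoff--von Neumann) with a pointer to Barvinok's book, so there is no in-paper argument to compare against. What you give is the standard self-contained proof. The easy direction is fine as written. In the substantive direction, the extraction of Hall's condition from double stochasticity is exactly right ($|S|$ units of row mass must land in columns of $N(S)$, each of which absorbs at most one unit), and the choice $\varepsilon=\min_i a_{i\sigma(i)}$ correctly guarantees both $\varepsilon>0$ and, since $A$ is not a permutation matrix, $\varepsilon<1$ together with $B\in DM(n)$. The only point worth making explicit is that the decomposition $A=\varepsilon A^{\sigma}+(1-\varepsilon)B$ contradicts extremality only because $A^{\sigma}\neq B$; but if they were equal then $A=A^{\sigma}$ would be a permutation matrix, contrary to hypothesis, so the contradiction is genuine. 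Your alternative inductive route (each subtraction of $\varepsilon A^{\sigma}$ kills at least one positive entry, yielding the full convex-combination statement) is also valid and is in fact the form of the theorem most often quoted; either version suffices for the use made of the lemma in the proof of Theorem \ref{AN}.
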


Now we prove Theorem \ref{AN}.
   \begin{proof}[\bf Proof of Theorem \ref{AN}]
   	Since $\Psi : DM (| F_n |) \la \Pi (\mu_{x,F_n},\mu_{y,F_n})$ is surjective, one has
   \begin{equation*}
   	W(\mu_{x,F_n},\mu_{y,F_n}) = \inf_{A \in DM(|F_n|)} \sum_{g_1, g_2 \in F_n} \frac {A(g_1,g_2) d (g_1 x, g_2 y)}{|F_n|}.
   \end{equation*}
  	Then by Birkhoff-Von Neumann Theorem, we deduce that
  	\begin{equation*}
  		\begin{split}
  			W(\mu_{x,F_n},\mu_{y,F_n}) 
  			&= \inf_{A \in PM(|F_n|)} \sum_{g_1, g_2 \in F_n} \frac {A(g_1,g_2) d (g_1 x, g_2 y)}{|F_n|}  \\
  			&= \inf_{ h \in Aut(F_n)} \frac 1{|F_n|} \sum_{g \in F_n} d(g x, g^h y),
  			\end{split}
  	\end{equation*}
  	the last equality comes from $\Psi \big( PM (|F_n|) \big) = \{\frac 1{|F_n|} \sum_{g \in F_n} \delta_{(gx, g^h y)} : h \in Aut (F_n) \}$. 
  	This ends the proof of Theorem \ref{AN}.
  \end{proof}

\end{document}